\definecolor{greencolor}{rgb}{0,0.45,0}
\definecolor{ucphcolor}{rgb}{0.517,0.016,0.016}
\newtheorem{nul}{}[section]
\theoremstyle{definition}
\newtheorem{rmk}[nul]{Remark}
\newtheorem{rec}[nul]{Recollection}
\newtheorem*{war*}{Warning}
\newtheorem{defn}[nul]{Definition}
\newtheorem{ex}[nul]{Example}
\newtheorem*{ques*}{Question}
\newtheorem*{idea*}{Idea}
\newtheorem{ques}[nul]{Question}
\newtheorem*{ans*}{Answer}
\newtheorem{cons}[nul]{Construction}
\theoremstyle{plain}
\newtheorem{corr}[nul]{Corollary}
\newtheorem{lem}[nul]{Lemma}
\newtheorem{thm}[nul]{Theorem}
\newtheorem*{thm*}{Theorem}
\newcommand{\syn}[1]{\rm{Syn}_{#1}}
\newcommand{\bb}[1]{\mathbb{#1}}
\newcommand{\mc}[1]{\mathcal{#1}}
\newcommand{\modp}{\bb{F}_{p}}
\newcommand{\rm}[1]{\mathrm{#1}}
\newcommand{\map}{\mathrm{map}}
\newcommand{\Map}{\mathrm{Map}}
\def\F{\mathbb{F}}
\def\Ext{\mathrm{Ext}}
\def\CC{\mathcal{C}}
\DeclareMathOperator*{\colim}{colim}
\def\Ss{\mathbb{S}}
\def\Sp{\mathrm{Sp}}
\def\A{\mc{A}}
\def\cofib{\mathrm{cofib}}
\title{A note on the Segal conjecture for large objects}
\date{\today}
\author{Robert Burklund \and Vignesh Subramanian}
\begin{document}

\maketitle

\begin{abstract}
    The Segal conjecture for $C_p$ (as proved by Lin and Gunawardena) asserts that the canonical map from the $p$-complete sphere spectrum to the Tate construction for the trivial action of $C_p$ on the $p$-complete sphere spectrum is an isomorphism. 
    In this article we extend the collection of spectra for which the canonical map $X \to X^{tC_p}$ is known to be an isomorphism to include any $p$-complete, bounded below spectrum whose mod $p$ homology, viewed a module over the Steenrod algebra, is complete with respect to the maximal ideal $I \subseteq \mathcal{A}$. 
\end{abstract}


\section{Introduction}


Inspired by the Atiyah--Segal completion theorem 
relating the completion of the representation ring of a group at the augmentation ideal to Borel equivariant $K$-theory, Segal conjectured that the natural comparison map
$$A(G)^{\wedge}_{I} \to \pi_{0}\left( \bb{S}^{BG_{+}} \right) $$
from the completion of the Burnside ring of a finite group $G$ at its augmentation ideal to 
the zero'th homotopy group of the Borel equivariant sphere spectrum is an isomorphism.
Segal's conjecture spurred a flurry of work on equivariant stable homotopy theory in the late 70s and early 80s \cite{lin1980conjectures, lin1980calculation, gunawardena1980segal, mayMcClure, ravenel1984segal, AGM} ending with Carlsson's proof of the full conjecture \cite{carlsson1984equivariant}.





Specializing to the case of $G = C_p$ isotropy separation relates the original form of Segal's conjecture to the claim that the canonical map $\bb{S}_p \rightarrow \bb{S}_p^{tC_p}$ from the $p$-complete sphere spectrum $\bb{S}_p$ to the Tate construction with respect to trivial $C_p$ on $\bb{S}_p$ is an isomorphism.
It is the latter statement which Lin proved in his breakthrough work \cite{lin1980conjectures} on the case $G=C_2$.

\begin{thm} [\cite{lin1980conjectures, gunawardena1980segal}] \label{linmainthm}
    The canonical map $\bb{S}_{p} \to \bb{S}_p^{tC_p}$ is an isomorphism.
\end{thm}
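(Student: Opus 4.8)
The plan is to prove the statement by comparing $\modp$-based Adams spectral sequences, reducing the topological assertion to a purely algebraic $\Ext$-isomorphism over the Steenrod algebra $\A$. Both the source $\bb{S}_p$ and the target $\bb{S}_p^{tC_p}$ are $p$-complete, and a map of $p$-complete spectra is an equivalence once it induces an isomorphism on Adams $E_2$-pages and both spectral sequences converge; the essential point is that the unit map $\bb{S}_p \to \bb{S}_p^{tC_p}$ is \emph{not} a mod-$p$ (co)homology isomorphism, but is nonetheless an isomorphism on $\Ext_{\A}$. So the first task is to understand this map on mod-$p$ cohomology as a map of $\A$-modules.

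First I would compute $\singcohom{\bb{S}^{tC_p}}{\modp}$. Presenting the Tate construction via the cofiber sequence $\bb{S}_{hC_p} \to \bb{S}^{hC_p} \to \bb{S}^{tC_p}$, or equivalently as the inverse limit of the tower built from the stunted projective spaces $\stuntedproj{\infty}{-n}$ (for $p=2$, and the stunted lens-space analogues for $p$ odd), its cohomology is the completed Tate cohomology $\widehat{\rm{H}}^{*}(C_p;\modp)$, namely $\modt[t^{\pm 1}]$ for $p=2$ and $\modp[t^{\pm 1}]\otimes\Lambda(e)$ for $p$ odd, carrying a specific $\A$-module structure. The content of this step is to recognize this $\A$-module as the Singer construction $R_+(\modp)$.

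Next I would identify the map. The canonical map $\bb{S}\to\bb{S}^{tC_p}$ induces on cohomology (the dual of) the Singer evaluation map $\epsilon\colon R_+(\modp)\to\modp$, up to the relevant degree shift. The crucial algebraic input — and the main obstacle — is Singer's theorem, in the form established by Adams--Gunawardena--Miller: the evaluation map $\epsilon$ induces an isomorphism
\[
\Ext_{\A}^{s,t}(\modp,\modp) \xrightarrow{\ \sim\ } \Ext_{\A}^{s,t}\bigl(R_+(\modp),\modp\bigr).
\]
This is the technical heart of Lin's original calculation; it is where the delicate internal structure of the Steenrod algebra enters, and I would either cite it or reprove it using the functoriality of the Singer construction together with a careful analysis of the $\A$-module $R_+(\modp)$.

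Finally I would run the comparison of Adams spectral sequences. The $E_2$-isomorphism is exactly the display above. The remaining subtlety is convergence: the source $\bb{S}_p$ has a convergent Adams spectral sequence, but the Tate spectrum $\bb{S}^{tC_p}$ is not bounded below, so I would control its Adams spectral sequence through the inverse-limit presentation, verifying that the relevant $\lim^1$-terms vanish (using the pro-structure of the tower of stunted projective spectra) so that it converges conditionally and strongly. Given the $E_2$-isomorphism and convergence on both sides, the induced map of abutments is an isomorphism, yielding the desired equivalence $\bb{S}_p \xrightarrow{\sim} \bb{S}_p^{tC_p}$.
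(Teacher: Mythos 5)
Your overall strategy --- comparing $\F_p$-based Adams spectral sequences and reducing the theorem to Singer's Ext-isomorphism (Lin's calculation at $p=2$, Gunawardena's at odd primes) --- is precisely the strategy of the classical proof that this paper cites for this statement, and it is the same skeleton that the paper's own main theorem re-implements in greater generality via synthetic spectra. The algebraic input is correctly identified and is not circular, since the $\Ext$ computation lives in papers separate from the topological statement.

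There is, however, a genuine gap in your first step. You assert that $H^{*}(\bb{S}^{tC_p};\F_p)$ is the completed Tate cohomology $\widehat{\rm{H}}^{*}(C_p;\F_p)$, recognized as the Singer construction $R_+(\F_p)$. That is false: mod-$p$ cohomology does not convert the inverse limit $\bb{S}^{tC_p} \simeq \varprojlim_k \big( \Sigma P^{\infty}_{-k} \big)$ into a colimit of cohomologies. The Singer construction is the \emph{continuous} cohomology of the tower, $\varinjlim_k H^{*}(\Sigma P^{\infty}_{-k})$, which is a different $\A$-module from the cohomology of the limit --- indeed, a posteriori the theorem itself implies $H^{*}(\bb{S}_p^{tC_p};\F_p) \cong \F_p$ concentrated in degree $0$. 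Consequently there is no Adams spectral sequence ``of the Tate spectrum'' whose $E_2$-page is $\Ext_{\A}(R_+(\F_p),\F_p)$ unless one \emph{defines} it as the inverse limit of the Adams spectral sequences of the bounded-below, finite-type spectra $\Sigma P^{\infty}_{-k}$, and then proves two things: that the limit $E_2$-page is $\Ext_{\A}(R_+(\F_p),\F_p)$ (a $\varprojlim^1$-vanishing argument using finiteness of each $\Ext^{s,t}_{\A}(H^{*}(\Sigma P^{\infty}_{-k}),\F_p)$), and that this limit spectral sequence converges to $\pi_{*}$ of the limit spectrum. Your final paragraph gestures at the second point, but your steps 1--3 are organized around the genuine cohomology of the Tate spectrum, and that identification is exactly where a literal implementation breaks down. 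This is the ``catch'' the paper isolates in its introduction: giving rigorous meaning to a limit of Adams spectral sequences is the real technical content of Lin's proof, handled ad hoc by Lin and handled in this paper by forming the limit in $\syn{\F_p}$ before inverting $\tau$, where $\tau$-completeness reduces the question to the algebraic special fiber.
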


In this article we investigate the collection of spectra for which the conclusion of the theorem above holds.

\begin{ques*} 
    For which spectra $X$ is the canonical map 
    $ X \to X^{tC_p} $
    an isomorphism?
\end{ques*}

As the Tate construction is an exact functor, \Cref{linmainthm} implies the canonical map is an isomorphism for any $X$ in the thick subcategory generated by $\bb{S}_p$ (i.e. the $p$-completions of finite spectra). In his thesis \cite{yuan2021integral}, Yuan proves that the canonical map is an isomorphism for a $p$-completion of an (infinite) sum of spheres $\left( \bigoplus_{i \in I} \bb{S} \right)_{p}$. Our main theorem extends the collection of $X$ for which this question has a positive answer to include many more examples of bounded below infinite spectra. 



 
\begin{thm} [Theorem~\ref{segalconjIcmpl}]
    Let $X$ be a $p$-complete, bounded below spectrum.
    If the homology of $X$ is $I$-complete (in the sense defined below), then the canonical map
    \[ X \rightarrow X^{tC_p} \]
    is an isomorphism.
\end{thm}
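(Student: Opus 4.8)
The plan is to translate the topological equivalence into an algebraic statement about the Steenrod algebra and to use synthetic spectra to control the passage between the two. First I would note that since $X$ is $p$-complete and $(-)^{tC_p}$ preserves $p$-completeness, both sides of $X \to X^{tC_p}$ are $p$-complete; the essential difficulty is that $X^{tC_p}$ is typically \emph{not} bounded below, so one cannot detect the equivalence by a naive comparison of $\mathbb{F}_p$-homology. To circumvent this I would work in $\mathbb{F}_p$-synthetic spectra $\syn{\mathbb{F}_p}$, lifting the comparison map to a synthetic map whose $\tau$-inversion recovers $X \to X^{tC_p}$ and whose reduction modulo $\tau$ records the induced map on algebraic invariants (the $\Ext_{\mathcal{A}}$-input of the Adams spectral sequence). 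The equivalence then reduces to a purely algebraic statement modulo $\tau$, together with a convergence statement that will be supplied by the hypothesis.

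The algebraic core is the Singer construction. Writing $M = H_*(X;\mathbb{F}_p)$ as a module over the Steenrod algebra $\mathcal{A}$, I expect $H_*(X^{tC_p};\mathbb{F}_p)$ to be computed by a (completed) Singer construction $\widehat{R}(M)$, with the comparison map inducing the Singer evaluation $M \to \widehat{R}(M)$. The theorem of Adams--Gunawardena--Miller \cite{AGM} asserts that the Singer construction induces an isomorphism on $\Ext_{\mathcal{A}}(-,\mathbb{F}_p)$; this is exactly the input that makes \Cref{linmainthm} run for $X = \bb{S}$ and, more generally, for $X$ in the thick subcategory it generates. For a general (large) $X$ the two new issues are that $M$ need no longer be of finite type, and that the completion in $\widehat{R}(M)$ is no longer harmless. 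I would therefore first establish the equivalence when $M$ is a finitely generated free (or induced) $\mathcal{A}$-module, where the statement reduces directly to Lin--Gunawardena and AGM, and then propagate along cofiber sequences and filtered colimits.

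This is where $I$-completeness enters. For bounded, finite-type $X$ the Tate construction is smashing, so $X^{tC_p} \simeq X \otimes \bb{S}^{tC_p}$ and no completion is visible; for large objects the failure of $(-)^{tC_p}$ to commute with infinite colimits is measured precisely by an $I$-adic completion on homology, with the augmentation ideal $I \subseteq \mathcal{A}$ playing the role of the filtration. The plan is to show that $I$-completeness of $M$ guarantees that the Singer/Tate filtration computing $H_*(X^{tC_p};\mathbb{F}_p)$ converges to the expected value and that the synthetic comparison map is a $\tau$-complete equivalence; concretely, that $M \xrightarrow{\sim} \lim_n M/I^n M$ matches the limit presentation of $X^{tC_p}$ so that the mod-$\tau$ isomorphism propagates without a residual derived-limit error. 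Yuan's computation \cite{yuan2021integral} for $\bigl(\bigoplus_{i} \bb{S}\bigr)_p$, whose homology is $I$-adically discrete, should appear as the simplest nontrivial instance.

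The hardest point is precisely reconciling the algebraic $I$-adic completion of $H_*(X;\mathbb{F}_p)$ with the topological (inverse-limit) structure of $X^{tC_p}$ for unbounded objects: one must rule out any $\lim^1$ or higher derived-limit obstruction that would prevent the algebraic Singer isomorphism from lifting to the genuine comparison map. I expect the cleanest route is to keep the entire argument inside $\syn{\mathbb{F}_p}$, where $\tau$-completeness packages these derived limits automatically, and to deduce the required convergence directly from the $I$-completeness hypothesis on $M$.
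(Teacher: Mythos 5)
Your synthetic framing (lift the comparison map to $\syn{\F_p}$, use $\tau$-completeness of both sides to reduce to a mod-$\tau$ statement) agrees with the paper, and \Cref{tatelimit} and \Cref{lem:lim-tau-invert-commute} make exactly that reduction rigorous. But the algebraic core of your plan has a genuine gap, and it appears twice. First, your base case is false: a spectrum whose $\F_p$-cohomology is a finitely generated free $\mathcal{A}$-module is a finite sum of shifts of $\F_p$, and the map $\F_p \to \F_p^{tC_p}$ is \emph{not} an isomorphism --- $\F_p^{tC_p}$ is unbounded, which is the paper's own first counterexample. The theorem of \cite{AGM} concerns the \emph{uncompleted} Singer construction $R_+(M) \to M$, whereas the Tate construction sees the completed object $\varprojlim_k \left( M \otimes H_*(\Sigma P^{\infty}_k) \right)$, and Ext does not commute with this inverse limit when $M$ is infinite; since both $\nu X$ and $\varprojlim_k \nu(X \otimes \Sigma P^{\infty}_k)$ are $\tau$-complete, the topological failure for $\F_p$ forces the completed algebraic statement to fail for $M = H_*(\F_p)$ as well, so no argument can establish your base case. (For the same reason, your side claim that the Tate construction is smashing for bounded finite-type $X$ is wrong: $\F_p \otimes \Ss^{tC_p} \simeq \F_p \neq \F_p^{tC_p}$; smashing requires $X$ finite/dualizable.) Second, ``propagate along filtered colimits'' cannot work even in principle: $\F_p$ is a filtered colimit of finite spectra, each of which satisfies the Segal conjecture by \Cref{linmainthm} and thickness, yet $\F_p$ does not. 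The failure of $(-)^{tC_p}$ to commute with filtered colimits is the entire difficulty of the large-object problem, so it cannot serve as an inductive step, and your hope that $I$-completeness ``guarantees convergence'' is left without a mechanism.

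The mechanism the paper actually uses runs in the opposite categorical direction, and this is what you would need to substitute. $I$-completeness expresses $M = H_*(X)$ as an \emph{inverse} limit $\varprojlim_n M/I^n$ with surjective transition maps; the associated graded pieces $K_n$ of the $I$-adic tower carry a \emph{trivial} $\mathcal{A}$-action, hence are bounded-below sums of shifts $\F_p[i]$, which bounded-below-ness converts into retracts of products; and tensoring with $H_*(\Sigma P^{\infty}_k)$ commutes with all of these limits by \Cref{lem:colimlim} (this lemma, on commuting a finite-type tensor factor past uniformly bounded-below limits in $\mathrm{Stable}(\mc{A})$, is the technical heart and has no counterpart in your outline). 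Everything therefore reduces to $M = \F_p[i]$, where the needed statement is precisely the Ext-isomorphism of \cite{lin1980conjectures, lin1980calculation, gunawardena1980segal} --- no appeal to \cite{AGM} for general modules, and no colimit induction anywhere.
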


\begin{defn} \label{defnInilpotent}
    Given a spectrum $X$, the $\bb{F}_p$-homology naturally carries an an action of the mod $p$ Steenrod algebra $\mc{A}$. 
    Let $I$ be the augmentation ideal of the Steenrod algebra. 
    We say the homology of $X$ is $I$-complete if 
    $H_{*}(X) \cong \varprojlim_{n} H_{*}(X)/I^{n}$.
\end{defn}

In order to explain our proof of \Cref{segalconjIcmpl} it will be useful to review the method used by Lin to prove \Cref{linmainthm} in \cite{lin1980conjectures}.
The key step is showing that it in fact suffices to prove that the canonical map induces an isomorphism on the level of the $E_2$-page of an appropriate Adams spectral sequence. The subtle point here is that when taking the Adams spectral sequence of $\bb{S}^{BC_p}$ it is important to first filter $BC_p$ by finite skeleta $P^k$, then take the Adams spectral sequence of $\bb{S}^{P^k}$ and then and only then pass to the inverse limit over $k$  at the level of Adams spectral sequences. The catch is that one must make good sense out of the phrase "limit of spectral sequences". Lin implemented this in \cite{lin1980conjectures} in an ad-hoc way, specialized to the problem at hand.
The isomorphism of $E_2$-pages then comes from the ext calculations of \cite{lin1980calculation}.

In this article, we revisit Lin's constructions within the context of synthetic spectra
as a natural home for "limits of synthetic spectra". The upshot of this is that it is now straightforward to prove a reduction to an $E_2$-page calculation for any bounded below $X$. 
After some further categorical manipulations we find that $I$-completeness is enough to reduce the necessary ext calculation to the cases already handled in \cite{lin1980calculation} and \cite{gunawardena1980segal}.

\begin{rmk}
    There are many examples where the canonical map is not an isomorphism.
    For example, 
    $\F_p^{tC_p}$ is unbounded (while $\F_p$ itself is bounded) and
    $(\mathrm{KU}_p)^{tC_p}$ is a $\bb{Q}$-module (while $\mathrm{KU}^{\wedge}_{p}$ is not rational). 
\end{rmk}

\begin{rmk}
    In the unstable setting Miller proved the Sullivan conjecture, that for a finite space $X$ with a trivial $C_p$ action, the map $X \rightarrow X^{hC_p}$ is an isomorphism after $p$-completion \cite{miller1984sullivan}.
    Extending this Lannes and Schwartz proved the same result with the weaker hypothesis that the cohomology of $X$ is locally finite \cite{lannes1986conjectures}. 
    It was this extension of the Sullivan conjecture to non-finite spaces that motivated us to look for a corresponding extension of the Segal conjecture to non-finite spectra.
\end{rmk}

In \Cref{sec:syn} we briefly review the theory of synthetic spectra which we will be using.
In \Cref{sec:main} we prove our main theorem.
Finally, in \Cref{examples} we give several examples of non-finite $I$-complete spectra.
A example of note is $\Sigma_+^\infty X$ where $X$ is a space whose cohomology is finite type and locally finite (see Definition \ref{defnlocallyfinite}). 

\subsection*{Acknowledgments}

The authors would like to thank Jesper Grodal, Haynes Miller and Zhouli Xu for helpful conversations related to the contents of this paper.
The authors would further like to thank Tobias Bartel, Natalia Castellena, Jesper Grodal, Branko Juran and Jesper Michael M{\o}ller for useful comments on a draft.

During the course of this work the authors were supported by the DNRF through the Copenhagen center for Geometry and Topology (DNRF151).
In addition, the first author was supported by NSF grant DMS-2202992.

\section{Synthetic Spectra and the Adams spectral sequence} \label{sec:syn}

In this section we provide a brief refresher on the Adams spectral sequence and synthetic spectra.
Our aim is to set the stage for the final section; our exposition will be terse. 


\begin{defn} 
    Let $E$ be homotopy associative ring spectrum.
    
    A finite spectrum $X$ is said to be \emph{finite $E$-projective} if $E_{*}X$ is a finitely generated, projective module over the graded ring $E_{*}$.
    
    We say that $E$ is \emph{of Adams-type}, 
    if $E$ can be written as a filtered colimit $E \cong \varinjlim E_{\alpha}$, where each $E_{\alpha}$ is finite $E$-projective and the $E$-cohomology of each $E_{\alpha}$ is dual to its homology, i.e. 
    $$ E^{*}E_{\alpha} \cong \rm{Hom}_{E_{*}}(E_{*}E_{\alpha}, E_{*}).\hfill\qedhere $$
\end{defn}


\begin{ex}
    Examples of Adams-type ring spectra include:
    $\bb{Q}$, 
    $\bb{F}_p$,
    the $p$-adic complex $K$-theory spectrum $KU_p$,
    the complex cobordism spectrum $MU$, 
    any Landweber exact cohomology theory and
    the Morava $K$-theories $K(n)$.
\end{ex}

Given an Adams-type ring spectrum $E$, Adams constructed, for each pair of spectra $X, Y \in \mathrm{Sp}$ with $X$ finite $E$-projective, a spectral sequence of signature
$$ \rm{Ext}^{s,t}_{E_{*}E}(E_{*}X,E_{*}Y) \cong E^{2}_{s,t} \implies \pi_{t-s}(\rm{map}(X,Y^{\wedge}_{E}))$$
where the $\rm{Ext}$ groups are calculated in the category of comodules over $E_{*}E$ and $Y^{\wedge}_{E}$ denotes the $E$-nilpotent completion of the spectrum $Y$ \cite{adams1974stable}.  

\begin{ex}
    In the case $E=\bb{F}_p$ 
    we may identify ${\modp}_{*}\modp$ with the mod-$p$ dual Steenrod algebra $\mathcal{A}^{\vee}_{p}$.    
\end{ex}

Building on Adams' work Pstr\k{a}gowski attached to each Adams-type ring spectrum $E$ a category $\syn{E}$ of $E$-synthetic spectra 
categorifying the $E$-based Adams spectral sequence. Objects of $\syn{E}$ are morally ``generalized $E$-Adams spectral sequences''.
For us this is a boon: In the past taking the necessary limits of Adams spectral sequences (as described in the introduction) was a tricky business, today it is as simple as noting that $\syn{E}$ is presentable and therefore complete.
For a general introduction to synthetic spectra we encourage readers to visit the wonderfully well written \cite{pstrągowski2022synthetic} where synthetic spectra where first introduced.
Below we recall the key properties which we will need.


\begin{rec}[{Pstr\k{a}gwoski \cite[Section~4]{pstrągowski2022synthetic}}]\hfill\\ \label{recallsyn}

    The category of $E$-based synthetic spectra $\syn{E}$ is 
    a stable, presentably symmetric monoidal $\infty$-category 
    that sits in a diagram of lax symmetric monoidal functors
    \[\begin{tikzcd}[sep=huge]
	   & {\rm{Sp}} \\
	   & {\syn{E}} \\
	   {\rm{Sp}} && {\rm{Mod}_{C\tau}} \ar[r, hook] & \mathrm{Stable}_{E_{*}E}
	   \arrow["\nu", from=1-2, to=2-2]
	   \arrow["{(-)[\tau^{-1}]}", from=2-2, to=3-1]
	   \arrow["\mathrm{Id}"', bend right, from=1-2, to=3-1]
	   \arrow["{C\tau \otimes-}"', from=2-2, to=3-3]
	   \arrow["{E_{*}(-)}", bend left, from=1-2, to=3-4]
    \end{tikzcd}\]
    \begin{enumerate}
        \item The synthetic analogue functor $\nu$ is fully faithful and preserves filtered colimits.
        \item If a spectrum $X$ can be written as a filtered colimit of finite $E$-projective spectra, then the map 
        $$\nu(X) \otimes \nu(Y) \rightarrow \nu(X \otimes Y)$$
        coming from the lax symmetric monoidal structure on $\nu$ is an isomorphism.
        \item In general $\nu$ does not preserve all colimits. Indeed,
        given a cofiber sequence $X \rightarrow Y \rightarrow Z$ in $\rm{Sp}$, 
        the associated sequence $\nu X \rightarrow \nu Y \rightarrow \nu Z$ is a cofiber sequence if and only if 
        \[ 0 \rightarrow E_{*}(X) \rightarrow E_{*}(Y) \rightarrow E_{*}(Z) \rightarrow 0 \]
        is a short exact sequence of $E_*E$ comodules.
        \item We define the bigraded spheres in $\syn{E}$ by 
        $\bb{S}^{a,b} : = \Sigma^{-b}\nu(\bb{S}^{a+b}) $
        and define the bigraded synthetic homotopy groups of an $X \in \syn{E}$ by
        \[ \pi_{a,b}(X) \coloneqq \pi_{0}\map_{\syn{E}}(\bb{S}^{a,b}, X). \]
        \item There is a map $\tau : \bb{S}^{0,-1} \rightarrow \bb{S}^{0,0}$ 
        constructed as the assembly map $\Sigma \nu(\mathbb{S}^{-1}) \rightarrow \nu(\Sigma \mathbb{S}^{-1} ) $. Tensoring this map with $X \in \syn{E}$ give an action of $\tau$ on $X$.
        \item The inclusion of $\tau$-invertible synthetic spectra into all synthetic spectra 
        $\syn{E}[\tau^{-1}] \hookrightarrow \syn{E}$
        admits a left adjoint 
        $(-)[\tau^{-1}]: \syn{E} \rightarrow \syn{E}[\tau^{-1}]$. 
        Given a synthetic spectrum $X$ then the $\tau$-inversion is given by
        $$X[\tau^{-1}] = \varinjlim ( X \xrightarrow{\tau} \Sigma^{0,1} X \xrightarrow{\tau} \Sigma^{0,2}X \xrightarrow{\tau}...)$$
        the colimit over $\tau$. 
        In particular, inverting $\tau$ is a smashing localization given by $- \otimes \bb{S}^{0,0}[\tau^{-1}]$.
        \item The composition of functors $\rm{Sp} \xrightarrow{\nu} \rm{Syn}_E \xrightarrow{(-)[\tau^{-1}]} \syn{E}[\tau^{-1}]$ is an isomorphism and identifies $\syn{E}[\tau^{-1}]$ with the category of spectra $\rm{Sp}$.
        \item The cofiber of $\tau$, $C\tau$ has a unique $\bb{E}_\infty$-algebra structure.
        \item For $E$ a homotopy commutative ring spectrum, then there is a fully faithful inclusion 
        \[ \rm{Mod}_{C\tau} \hookrightarrow \rm{Stable}_{E_*E} \] 
        from the category of $C\tau$-modules to the stable $\infty$-category of $E_*E$-comodules (in the sense of Hovey)\footnote{See \cite[Section~3.2]{pstrągowski2022synthetic}.} 
        such that the diagram above commutes.
        As a consequence for $X \in \rm{Sp}$ we have a natural isomorphism
        $$\pi_{t-s,s}(C\tau \otimes \nu X) \cong \rm{Ext}^{s,t}_{{E}_{*} E}(E_*, E_{*}(X)).$$
        The $\tau$-bockstein sseq then coincides with the $E$-based Adams sseq (see \cite[Section~9]{burklund2022boundaries}).
        \item We say a synthetic spectrum $X$ is $\tau$-complete if the map 
        $X \rightarrow \varprojlim_{n} X \otimes C \tau^n$ is an isomorphism.\footnote{This is same as being complete with respect to the dualizable algebra $C\tau$ in the sense of \cite[Section~2]{MNN17}.} Let $Y$ be a spectrum. $Y$ is $E$-nilpotent complete if and only if $\nu Y$ is $\tau$-complete (see \cite[Prop~A.13]{burklund2022boundaries}).
        Note that on $\tau$-complete synthetic spectra the functor $C\tau \otimes -$ is conservative.\hfill\qedhere
    \end{enumerate}
\end{rec}

We summarize the above by saying that we think of $\syn{E}$ as a one-parameter deformation of the category of spectra with deformation parameter $\tau$. At $\tau = 0$ the deformation degenerates and we obtain an algebraic special fiber (given by a category of comodules).




\begin{rmk} 
    The case $E=\bb{F}_p$ has several special features.
    \begin{enumerate}
        \item Every finite spectrum is finite $\bb{F}_{p}$-projective. 
        \item $\nu_{\modp}$ is symmetric monoidal.
        \item The bigraded spheres $\bb{S}^{a,b}$ are a collection of compact generators of $\syn{\bb{F}_p}$.
        \item A map of synthetic spectra $f: X \rightarrow Y$ is an isomorphism if and only if it induces an isomorphism of bigraded homotopy groups.\hfill\qedhere
    \end{enumerate}
\end{rmk}

In this article we will work exclusively in the case $E = \modp$.

\begin{rmk}\label{rmktaucompl}
    Let $X$ be a $p$-complete, bounded below spectrum, then $X$ is $\bb{F}_p$-nilpotent complete \cite{bousfield1979localization}.
    In particular, $\nu X$ is $\tau$-complete if $X$ is $p$-complete and bounded below.
\end{rmk}

It is this $\tau$-completeness that will allow us to check that various maps are isomorphisms mod $\tau$ (i.e. in some category of comodules) and draw conclusions in the category of synthetic spectra.



In preparation for the proof of our main theorem we will need a somewhat technical lemma about the interaction between inverting $\tau$ and limits.

\begin{lem} \label{lem:lim-tau-invert-commute}
    Let $F : J \to \Sp$ be a diagram of spectra. The coassembly map 
    \[  \left( \lim_J \nu(F(j)) \right)[\tau^{-1}] \to \lim_J \left( \nu(F(j))[\tau^{-1}] \right) \cong \lim_J F(j) \]
    is an isomorphism.
\end{lem}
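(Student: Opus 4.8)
The plan is to trade the coassembly map for a statement about a single fiber, and then to control that fiber via the vanishing of negative-degree $\Ext$. Write $(-)[\tau^{-1}]$ for the smashing localization of \Cref{recallsyn}(6)--(7) and let $\iota\colon \syn{\modp}[\tau^{-1}]\hookrightarrow\syn{\modp}$ be its fully faithful right adjoint, so that $(-)[\tau^{-1}]\circ\nu\simeq\mathrm{id}$ and $(-)[\tau^{-1}]\circ\iota\simeq\mathrm{id}$ under the identification $\syn{\modp}[\tau^{-1}]\simeq\Sp$. For each $j$ the localization unit gives a fiber sequence $K(j)\to\nu(F(j))\to\iota(F(j))$. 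Since $\iota$ is a right adjoint it preserves limits, so $\lim_J \iota(F(j))\simeq \iota(\lim_J F(j))$, and taking limits produces a fiber sequence $\lim_J K\to \lim_J\nu(F(j))\to \iota(\lim_J F(j))$. Applying the exact functor $(-)[\tau^{-1}]$ and using $(-)[\tau^{-1}]\circ\iota\simeq\mathrm{id}$ identifies the coassembly map with the resulting map $(\lim_J\nu(F(j)))[\tau^{-1}]\to \lim_J F(j)$, whose fiber is $(\lim_J K)[\tau^{-1}]$. Thus it suffices to prove $(\lim_J K)[\tau^{-1}]\simeq 0$.

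The key step is the following stabilization statement: for any spectrum $X$ the map $\nu(X)\to\iota(X)$ is an isomorphism on $\pi_{a,w}$ for all $a$ and all weights $w\le 0$. To see this, smash the defining cofiber sequence $\bb{S}^{0,-1}\xrightarrow{\tau}\bb{S}^{0,0}\to C\tau$ with $\nu(X)$ to get $\Sigma^{0,-1}\nu(X)\xrightarrow{\tau}\nu(X)\to C\tau\otimes\nu(X)$, and recall from \Cref{recallsyn}(10) that $\pi_{a,b}(C\tau\otimes\nu(X))\cong\Ext^{b,\,a+b}_{\mc{A}}(\modp,H_*(X))$, which vanishes for $b<0$ because $\Ext$ is concentrated in non-negative cohomological degree. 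Feeding this vanishing into the long exact sequence of the cofiber sequence shows that $\tau\colon \pi_{a,w}(\nu(X))\to \pi_{a,w-1}(\nu(X))$ is an isomorphism for every $w\le 0$; since $\iota(X)$ is $\tau$-invertible, $\pi_{a,w}(\iota(X))\cong\pi_a(X)$ for all $w$, and the localization unit is precisely the comparison to the $\tau$-colimit, hence an isomorphism on $\pi_{a,w}$ for $w\le 0$. Applying this to each $F(j)$ yields $\pi_{a,w}(K(j))=0$ for all $a$ and all $w\le 0$.

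To finish, note that the full subcategory $\mc{C}=\{\,Y : \pi_{a,w}(Y)=0 \text{ for all } a \text{ and all } w\le 0\,\}$ is closed under products (bigraded homotopy sends products to products) and under fibers (by the long exact sequence), hence under all small limits, since every limit is built from products and pullbacks. Therefore $\lim_J K\in\mc{C}$, i.e.\ $\pi_{a,w}(\lim_J K)=0$ for all $w\le 0$. Using that the bigraded spheres are compact (so $\pi_{a,b}$ commutes with the filtered $\tau$-colimit defining $(-)[\tau^{-1}]$), we compute $\pi_{a,b}((\lim_J K)[\tau^{-1}])\cong \colim_n \pi_{a,b-n}(\lim_J K)$, a colimit whose terms vanish once $b-n\le 0$; hence it is zero for all $(a,b)$, and since the bigraded spheres generate $\syn{\modp}$ this gives $(\lim_J K)[\tau^{-1}]\simeq 0$, completing the proof. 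The main obstacle is exactly the second paragraph: a filtered $\tau$-colimit does not commute with limits in general, and it is only the invertibility of $\tau$ in non-positive weights --- coming from $\Ext^{<0}_{\mc{A}}=0$ --- that confines all of the relevant homotopy to weights $\le 0$ and rescues the interchange.
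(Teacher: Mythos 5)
Your proof is correct, but it takes a genuinely different route from the paper's. The paper proves the lemma by a direct computation: it maps $\Ss^n$ into $\bigl( \lim_J \nu F(j) \bigr)[\tau^{-1}]$, uses compactness of the bigraded spheres to pull the filtered $\tau$-colimit outside, commutes the mapping spectrum past the limit, and then uses \emph{full faithfulness of $\nu$} to identify each term with $\Map_{\Sp}(\Ss^{n}, F(j))$, whereupon the transition maps of the colimit become equivalences and the colimit collapses. You instead isolate the fiber $K(j)$ of the localization unit $\nu F(j) \to \iota F(j)$, and your key input is the algebraic special fiber: the identification $\pi_{a,b}(C\tau \otimes \nu X) \cong \Ext^{b,a+b}_{\mc{A}}(\F_p, H_*X)$ together with $\Ext^{<0} = 0$ forces $\tau$ to act invertibly on $\pi_{a,w}(\nu X)$ for $w \le 0$, hence $\pi_{a,w}(K(j)) = 0$ there; this vanishing region is closed under limits, and $\tau$-inversion kills anything supported in positive weights. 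Note that your ``key stabilization statement'' is exactly what the paper extracts from full faithfulness of $\nu$ (indeed $\pi_{a,-t}(\nu X) \cong \pi_a(X)$ for $t \ge 0$ follows formally from full faithfulness and the definition of the bigraded spheres), so the two arguments rest on the same underlying phenomenon but derive it from different inputs. What each buys: the paper's computation is shorter, uses only categorical properties of $\nu$, and works verbatim for any Adams-type $E$; your argument makes the mechanism more transparent (limits cannot create homotopy in the $\tau$-inverted range) but, as written, leans on the $\F_p$-specific fact that the bigraded spheres detect equivalences in $\syn{\F_p}$ --- a dependence you could remove by observing that a $\tau$-invertible synthetic spectrum with vanishing bigraded homotopy has vanishing underlying spectrum, which is detected by ordinary spheres.
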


\begin{proof}
    Using compactness of the bigraded spheres and the fact that $\nu$ is fully faithful we compute that
    \begin{align*} 
        \Map_{\Sp} & \left(\Ss^n, \left( \lim_J \nu F(j) \right)[\tau^{-1}] \right) 
        \cong \colim_{0 \leq t \to \infty} \Map_{\syn{E}}(\Sigma^{0,-t}\nu\Ss^{n}, \lim_J \nu F(j)) \\
        &\cong \colim_{0 \leq t \to \infty} \lim_J \Map_{\syn{E}}(\Sigma^{0,-t}\nu\Ss^{n}, \nu F(j))
        \cong \colim_{0 \leq t \to \infty} \lim_J \Omega^t \Map_{\syn{E}}(\nu\Ss^{n-t}, \nu F(j)) \\
        &\cong \colim_{0 \leq t \to \infty} \lim_J \Omega^t \Map_{\Sp}(\Ss^{n-t}, \F(j))
        \cong \colim_{0 \leq t \to \infty} \lim_J \Map_{\Sp}(\Ss^{n}, \F(j)) \\
        &\cong \lim_J \Map_{\Sp}(\Ss^{n}, \F(j))
        \cong \Map_{\Sp}(\Ss^{n}, \lim_J \F(j)).
    \end{align*}
\end{proof}

\section{The Segal Conjecture for $I$-complete spectra} \label{sec:main}

Following the same broad strokes as in \cite{lin1980conjectures} we will prove our main theorem by rewriting the Tate construction as a limit, evaluating this limit at the level of Adams spectral sequences (implemented for us through $\F_p$-synthetic spectra) and then proving that induced map on $E_2$-pages is already an isomorphism.

\begin{cons}[Stunted lens spaces] \label{stuntedprojspectrum}
    Let $\lambda$ be the rank $1$ complex representation of $C_p$ where the generator $\sigma \in C_p$ acts by a primitive $p$'th root of unity. Using this we form the stable representation spheres $\bb{S}^{k\lambda} \in \mathrm{Sp}^{BC_p}$ for each $k \in \bb{Z}$. The inclusions $k\lambda \to (k+1)\lambda$ provide $C_p$-equivariant maps of representation spheres $a_\lambda : \bb{S}^{k\lambda} \to \bb{S}^{(k+1)\lambda}$. We define $ P^{\infty}_{k} \coloneqq (\bb{S}^{k\lambda})_{hC_p} $ and write $a_\lambda$ for the corresponding maps $P^\infty_k \to P^\infty_{k+1}$.
\end{cons}

When $p=2$ the stunted lens spaces $P_k^\infty$ coincide with James' stunted projective spaces $\bb{R}P^\infty_{2k}$ \cite{james1959spaces} (cf. \cite{atiyah1961thom}). The Thom isomorphism lets us compute that the homology of $P_k^\infty$ is $\F_p$ in degrees $\geq 2k$ and zero otherwise.


\begin{lem} \label{tatelimit}
    Using the stunted lens spaces we have an expression for the Tate construction of the trivial action on $X$ as $ \varprojlim_{k} (X \otimes \Sigma P_{k}) $
    where the limit is along the maps induced by $a_\lambda$.
\end{lem}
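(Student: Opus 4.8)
The plan is to realize the Tate construction as a homotopy-fixed-point spectrum and then convert the \emph{colimit} presentation of the family of representation spheres into a \emph{limit}. Recall that for the trivial action one may write $X^{tC_p} \simeq (\widetilde{EC_p} \otimes X)^{hC_p}$, where $\widetilde{EC_p} \simeq \colim_m \bb{S}^{m\lambda}$ is the colimit of the representation spheres along $a_\lambda$. The idea is to probe this colimit through the \emph{negative} representation spheres $\bb{S}^{-n\lambda}$: for each $n$ the object $\Sigma(\bb{S}^{-n\lambda} \otimes X)_{hC_p} \simeq X \otimes \Sigma P^{\infty}_{-n}$ is exactly one of the (shifted) stunted lens spaces, and these assemble, along the maps induced by $a_\lambda$, into the tower whose limit the statement claims to be $X^{tC_p}$.

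First I would record two structural facts. The first is that $a_\lambda$ becomes invertible after applying $(-)^{tC_p}$: since $\bb{S}^{m\lambda} \otimes \widetilde{EC_p} \simeq \widetilde{EC_p}$ (the map $a_\lambda$ is cofinal in the colimit defining $\widetilde{EC_p}$), smashing with $\bb{S}^{-n\lambda}$ does not alter the Tate construction, giving compatible equivalences $(\bb{S}^{-n\lambda} \otimes X)^{tC_p} \simeq X^{tC_p}$ for all $n$. Feeding this into the norm cofiber sequence for $\bb{S}^{-n\lambda} \otimes X$ and rotating yields, for each $n$, a fiber sequence
\[ (\bb{S}^{-n\lambda} \otimes X)^{hC_p} \to X^{tC_p} \to \Sigma(\bb{S}^{-n\lambda} \otimes X)_{hC_p}, \]
natural in $n$ with respect to $a_\lambda$, with constant middle term $X^{tC_p}$.

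Next I would pass to the limit over $n$. Since fiber sequences are preserved by limits, it suffices to show that the limit of the fibers vanishes, i.e. $\varprojlim_n (\bb{S}^{-n\lambda} \otimes X)^{hC_p} \simeq 0$; granting this, the fiber sequence collapses to the desired equivalence $X^{tC_p} \simeq \varprojlim_n \Sigma(\bb{S}^{-n\lambda} \otimes X)_{hC_p} \simeq \varprojlim_n X \otimes \Sigma P^{\infty}_{-n}$, which is the tower appearing in the statement. To establish the vanishing I would use the cofiber sequence $S(n\lambda)_+ \to \bb{S}^0 \to \bb{S}^{n\lambda}$ relating the unit sphere of $n\lambda$ to its one-point compactification. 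Applying $F(-,X)$ produces fiber sequences $\bb{S}^{-n\lambda} \otimes X \to X \to F(S(n\lambda)_+, X)$; after $(-)^{hC_p}$ and $\varprojlim_n$ the right-hand tower computes $F(EC_{p+}, X)^{hC_p} \simeq X^{hC_p}$ — here I use $\colim_n S(n\lambda)_+ \simeq EC_{p+}$ together with Borel completeness of $X$ — while the middle tower is constant at $X^{hC_p}$. As the connecting map is the identity in the limit, the limit of the left-hand terms is null, as required.

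The main obstacle is precisely this vanishing step $\varprojlim_n (\bb{S}^{-n\lambda} \otimes X)^{hC_p} \simeq 0$: it is where one must check that the tower maps are the correct ones and that the relevant (co)limits commute with $(-)^{hC_p}$ and with $F(-,X)$, and that the limiting tower is identified with the homotopy fixed points of the Borel completion. Everything else is formal, so I would isolate this interchange as a short sublemma and treat the remaining bookkeeping (naturality in $n$, compatibility of the Tate equivalences with $a_\lambda$) as routine.
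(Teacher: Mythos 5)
Your proof is correct, and its skeleton matches the paper's: both arguments run the norm cofiber sequence $(-)_{hC_p} \to (-)^{hC_p} \to (-)^{tC_p}$ along the tower of $\lambda$-desuspensions of $X$, identify the tower of Tate constructions as constant at $X^{tC_p}$, and show that the tower of homotopy fixed points has vanishing inverse limit, so that the boundary map exhibits $X^{tC_p} \cong \varprojlim_k \Sigma(\bb{S}^{k\lambda}\otimes X)_{hC_p} \cong \varprojlim_k \left( X \otimes \Sigma P^{\infty}_{k} \right)$. Where you genuinely differ is in how the two key sub-claims are verified. For constancy of the Tate tower, the paper observes that the cofiber of $a_\lambda$ is built from two free $C_p$-cells, on which the Tate construction vanishes; you instead write $X^{tC_p} \cong (\widetilde{E}C_p \otimes X)^{hC_p}$ and use the absorption equivalence $\widetilde{E}C_p \otimes \bb{S}^{-n\lambda} \cong \widetilde{E}C_p$ — two faces of the same fact. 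For the vanishing step, the paper's route is shorter: it commutes the limit past $(-)^{hC_p}$ and notes that the underlying non-equivariant map of $a_\lambda$ is nullhomotopic (the Euler class of the restricted, trivial, bundle vanishes), so $\varprojlim_k (\bb{S}^{k\lambda}\otimes X) \cong 0$ already as a spectrum with $C_p$-action. You instead dualize the cofiber sequence $S(n\lambda)_+ \to \bb{S}^0 \to \bb{S}^{n\lambda}$ and identify the limit of the fixed-point tower with the fiber of $X^{hC_p} \to F(EC_{p+},X)^{hC_p}$, which is an equivalence because $EC_p$ is contractible and equivalences of spectra with $C_p$-action are detected on underlying spectra. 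This costs you the interchange checks you rightly flag (limits versus $(-)^{hC_p}$ and $F(-,X)$, naturality in $n$), all of which do hold; what it buys is that contractibility of $EC_p = S(\infty\lambda)$ becomes the only geometric input, in place of the nullhomotopy of the underlying $a_\lambda$, and it makes visible the classical description of the Tate construction as an inverse limit of function spectra over stunted lens spaces. Either way the lemma follows.
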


\begin{proof}
    We will prove the lemma by examining the cofiber sequence
    \[ \varprojlim_k (\bb{S}^{k\lambda} \otimes X)_{hC_p} \to \varprojlim_k (\bb{S}^{k\lambda} \otimes X)^{hC_p} \to \varprojlim_k (\bb{S}^{k\lambda} \otimes X)^{tC_p}. \]
    First we note that as $X$ has trivial action 
    $\varprojlim_k (\bb{S}^{k\lambda} \otimes X)_{hC_p} \cong \varprojlim_k (P_k^\infty \otimes X)$.
    Second we rewrite $\varprojlim_k (\bb{S}^{k\lambda} \otimes X)^{hC_p} $ as
    $  ( \varprojlim_k (\bb{S}^{k\lambda} \otimes X) )^{hC_p} $
    and use the fact that the underlying (non-equivariant) map of $a_\lambda$ is nullhomotopic
    to conclude that $ \varprojlim_k (\bb{S}^{k\lambda} \otimes X) = 0$.
    The lemma now follows by noting that the cofiber of $a_\lambda$ is built out of two free $C_p$-cells and using this to conclude that $(a_\lambda \otimes X)^{tC_p}$ is an isomorphism and therefore
    $ \varprojlim_k (\bb{S}^{k\lambda} \otimes X)^{tC_p} \cong X^{tC_p}$.    
\end{proof}



\begin{thm} \label{segalconjIcmpl}
    Let $X$ be a $p$-complete, bounded below spectrum.
    If the homology of $X$ is $I$-complete, then $X$ satisfies the Segal conjecture for $C_p$. 
    That is, the canonical map 
    \[ X \rightarrow X^{tC_p} \] 
    is an isomorphism, where the Tate construction is with respect to trivial action.
\end{thm}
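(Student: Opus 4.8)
The plan is to lift the whole situation into $\F_p$-synthetic spectra, use $\tau$-completeness to reduce the claim to an isomorphism modulo $\tau$ (i.e.\ on $E_2$-pages), and then settle the resulting $\Ext$-comparison by a dévissage onto the sphere case. First I would package the Tate construction synthetically by setting
\[ \mathcal{T} \coloneqq \varprojlim_k \nu\!\left( \Sigma P^\infty_k \otimes X \right) \in \syn{\modp}. \]
Combining \Cref{lem:lim-tau-invert-commute} (to commute $\tau$-inversion past the limit) with \Cref{tatelimit} gives $\mathcal{T}[\tau^{-1}] \cong \varprojlim_k (\Sigma P^\infty_k \otimes X) \cong X^{tC_p}$. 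The canonical map then lifts: the composite
\[ \phi \colon \nu X \xrightarrow{\ \nu(\mathrm{can})\ } \nu\!\left(X^{tC_p}\right) \xrightarrow{\ \mathrm{coassembly}\ } \mathcal{T} \]
restricts under $(-)[\tau^{-1}]$ to the canonical map $X \to X^{tC_p}$, using that $(-)[\tau^{-1}] \circ \nu \cong \mathrm{Id}$ by \Cref{recallsyn}(7) and that the coassembly map of \Cref{lem:lim-tau-invert-commute} inverts to the identity. It therefore suffices to prove that $\phi$ is an isomorphism of synthetic spectra, since inverting $\tau$ then yields the theorem.

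Next I would check that both ends of $\phi$ are $\tau$-complete, so that by \Cref{recallsyn}(11) it is enough to verify that $\phi$ becomes an isomorphism after applying $C\tau \otimes -$. The source $\nu X$ is $\tau$-complete by \Cref{rmktaucompl}. For the target, although $P^\infty_k$ is infinite it is bounded below and of finite type, so for each fixed $k$ the homotopy of $P^\infty_k \otimes X$ stabilizes in each degree from a finite skeleton; since $X$ is $p$-complete this forces $\Sigma P^\infty_k \otimes X$ to be bounded below and $p$-complete, hence $\F_p$-nilpotent complete, hence $\nu(\Sigma P^\infty_k \otimes X)$ is $\tau$-complete by \Cref{recallsyn}(11). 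As $\tau$-complete synthetic spectra are closed under limits, $\mathcal{T}$ is $\tau$-complete.

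It then remains to analyze $C\tau \otimes \phi$. Because $C\tau$ is dualizable, $C\tau \otimes -$ commutes with the defining limit; combining \Cref{recallsyn}(10) with the Künneth isomorphism $H_*(\Sigma P^\infty_k \otimes X) \cong \Sigma H_*(P^\infty_k) \otimes H_*(X)$ identifies the induced map on bigraded homotopy, up to a $\varprojlim^1_k$ term, with
\[ \Ext_{\A}\!\left(\modp,\, H_*(X)\right) \longrightarrow \varprojlim_k \Ext_{\A}\!\left(\modp,\, \Sigma H_*(P^\infty_k) \otimes H_*(X)\right). \]
Writing $M = H_*(X)$, the final step is to prove this is an isomorphism with vanishing $\varprojlim^1$. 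I would do this by filtering $M$ by its $I$-adic filtration: each associated graded piece $I^nM/I^{n+1}M$ is a \emph{trivial} $\A$-module, that is, a sum of suspensions of $\modp$, for which the statement is exactly the algebraic Segal conjecture for $C_p$ proved in \cite{lin1980calculation, gunawardena1980segal}. The role of $I$-completeness is precisely that $M$ is correctly reassembled from these pieces: the natural target of the comparison is $\Ext_{\A}(\modp, M^{\wedge}_I)$, and the hypothesis gives $M^{\wedge}_I \cong M$.

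I expect the main obstacle to be this last step — not the single-piece input, which is classical, but the bookkeeping required to interchange the inverse limit over $k$ with the $I$-adic filtration and to control the two layers of $\varprojlim^1$ (over $k$ and over the filtration) at once. Making precise the identification of the limit with $\Ext_{\A}(\modp, M^{\wedge}_I)$, and verifying that $I$-completeness of $M$ is exactly what makes the comparison map an isomorphism, is where the real work of the argument lies.
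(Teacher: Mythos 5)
Your reduction to the special fiber matches the paper's proof of \Cref{segalconjIcmpl} step for step: the synthetic packaging of the Tate construction via \Cref{tatelimit} and \Cref{lem:lim-tau-invert-commute}, the $\tau$-completeness of source and target (\Cref{rmktaucompl}, closure of $\tau$-complete objects under limits), and the passage to $C\tau \otimes -$ using dualizability and K\"unneth are all exactly what the paper does. The gap is in your final step, and it is not the $\varprojlim^1$ bookkeeping you flag as the main obstacle. After filtering $M = H_*(X)$ $I$-adically, the associated graded pieces are trivial $\mc{A}$-modules, but they are \emph{infinite} bounded-below sums $\bigoplus_i \F_p[i]^{\oplus J_i}$, and the statement for such a sum is not ``exactly'' the Lin--Gunawardena input: the limit $\varprojlim_k$ does not commute with infinite direct sums, so the isomorphism for a single $\F_p[i]$ does not formally imply it for the sum. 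The same commutation problem already appears one step earlier, when you need
\[ M \otimes H_*(\Sigma P^{\infty}_{k}) \cong \varprojlim_n \left( M/I^n \otimes H_*(\Sigma P^{\infty}_{k}) \right), \]
i.e.\ when you push the $I$-adic tower through the tensor product; your phrase ``the natural target of the comparison is $\Ext_{\mc{A}}(\F_p, M^{\wedge}_I)$'' is essentially the assertion to be proved, not something available in advance.

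The paper closes this hole with a dedicated technical result, \Cref{lem:colimlim}: tensoring with a finite type comodule (such as $H_*(\Sigma P^{\infty}_{k})$) commutes with limits of uniformly bounded-below diagrams valued in $\mathrm{Stable}(\mc{A})^{\heartsuit}$; it is proved by a skeletal filtration of the finite type comodule and connectivity estimates, using compactness of the shifts $\F_p[t]$. Granting this, the trivial-module case reduces to a single $\F_p[i]$ as follows: the bounded-below sum agrees with the corresponding product, each $\F_p[i]^{\oplus J_i}$ is a retract of the product $\F_p[i]^{\times J_i}$, products commute with $\varprojlim_k$, and \Cref{lem:colimlim} moves $- \otimes H_*(\Sigma P^{\infty}_{k})$ past the product; only then is the statement the Ext-isomorphism of \cite{lin1980conjectures, lin1980calculation, gunawardena1980segal}. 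Without this lemma (or an equivalent device) your d\'evissage does not close. A further, smaller, remark: the paper works object-wise in $\mathrm{Stable}(\mc{A})$ with homotopy limits rather than on Ext groups, which makes the $\varprojlim^1$ issues you worry about disappear into the formalism instead of requiring separate Milnor-sequence arguments.
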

 
\begin{proof}
    The formula for the Tate construction from \Cref{tatelimit}
    together with \Cref{lem:lim-tau-invert-commute} gives us a commutative diagram
    \[ \begin{tikzcd}
        (\nu X)[\tau^{-1}] \ar[r] \ar[d, "\cong"] & 
        \left( \varprojlim_{k} \nu(X \otimes \Sigma P^{\infty}_{k}) \right)[\tau^{-1}] \ar[d, "\cong"] \\
        X \ar[r] &
        X^{tC_p}
    \end{tikzcd} \]
    where the vertical maps are isomorphism.
    We will prove the theorem by showing that the map
    \begin{align} \nu X \to \varprojlim_{k} \nu(X \otimes \Sigma P^{\infty}_{k}) \label{eqn:want-iso} \end{align}
    is already an isomorphism in $\syn{\F_p}$ before inverting $\tau$.

    Using that $X$ is bounded below and $p$-complete we know from \Cref{rmktaucompl} 
    that $X$ is also $\F_p$-nilpotent complete and $\nu X$ is $\tau$-complete.
    Similarly, $X \otimes \Sigma P^\infty_k$ is bounded below and $p$-complete for every $k \in \bb{Z}$, therefore
    $\nu(X \otimes \Sigma P^{\infty}_{k})$ is $\tau$-complete as well.
    Since the subcategory of $\tau$-complete synthetic spectra spectra is closed under limits, 
    $\varprojlim_{k} \nu(X \otimes \Sigma \bb{R}P^{\infty}_{k})$ is $\tau$-complete. 
    The upshot of this is that it will suffice to prove that the map from \Cref{eqn:want-iso}
    is an isomorphism after tensoring with $C\tau$.
    
    Using the fact that $C\tau$ is dualizable\footnote{Recall that $C\tau \cong \rm{Cofib}(\tau : \bb{S}^{0,-1} \rightarrow \bb{S}^{0,0})$, where both $\bb{S}^{0,-1}$ and $\bb{S}^{0,0}$ are dualizable.} to commute $C\tau \otimes -$ past the limit 
    together with the isomorphism $ C\tau \otimes \nu X \cong H_{*}(X)$ and the Kunneth isomorphism 
    we reduce further to showing that the map
    \[ H_{*}(X) \rightarrow \varprojlim_k  \left( H_{*}(X) \otimes H_{*}(\Sigma P^{\infty}_{k}) \right) \]
    is an isomorphism in $\mathrm{Stable}(\mc{A})$.
    
    Using the hypothesis that the homology of $X$ is $I$-complete 
    together with \Cref{lem:colimlim} and \Cref{cor:lim-underlying} below we have isomorphisms
    \[ H_{*}(X) \cong \varprojlim_n H_{*}(X)/I^n, \] 
    \[ H_{*}(X) \otimes H_{*}(\Sigma P^{\infty}_{k}) \cong \varprojlim_n \left( H_{*}(X)/I^n \otimes H_{*}(\Sigma P^{\infty}_{k}) \right) \]
    (there is no $\varprojlim^1$ term since the transition maps in these systems are all surjective).
    This allows us to pass to the associated graded of the $I$-adic filtration on $H_*(X)$ and prove the desired isomorphism there.
    Let $K_n \coloneqq \rm{fib}(H_{*}(X)/I^{n+1} \rightarrow H_{*}(X)/I^{n})$. 
    We have at this point reduced to showing that 
    \[ K_n \rightarrow \varprojlim_k K_n \otimes H_{*}(\Sigma P^{\infty}_{k}) \] 
    is an isomorphism in $\mathrm{Stable}(\mc{A})$ for each $n \geq 0$.
    
    At this point we observe that $\mc{A}_p$ acts trivially on $K_n$
    and therefore that $K_n \cong \bigoplus_{i} \bb{F}_p[i]^{\oplus J_i}$ for a collection of sets $J_i$ with $J_i = \emptyset$ 
    for $i \ll 0$. Using fact that this sum is bounded below we have 
    $\bigoplus_{i} \bb{F}_p[i]^{\oplus J_i} \cong \prod_{i} \bb{F}_p[i]^{\oplus J_i} $.
    We can furthermore write $\bb{F}_p[i]^{\oplus J_i}$ as a retract of $\bb{F}_p[i]^{\times I_i}$. At this point it will suffice to show that
    \[ \prod_{i} \bb{F}_p[i]^{\times J_i} \to \varprojlim_k \left( \left( \prod_{i} \bb{F}_p[i]^{\times J_i} \right) \otimes H_{*}(\Sigma P^{\infty}_{k}) \right) \]
    is an isomorphism in $\mathrm{Stable}(\mc{A})$.
    Applying \Cref{lem:colimlim} again to commute the tensor product with $H_{*}(\Sigma P^{\infty}_{k})$ past this product we reduce to showing that
    $$\bb{F}_p[i] \rightarrow \varprojlim_k \left( \bb{F}_p[i] \otimes H_{*}(\Sigma P^{\infty}_{k}) \right) $$
    is an isomorphism in $\mathrm{Stable}(\mc{A})$.
    That this map is an isomorphism is the Ext-isomorphism of \cite{lin1980conjectures, lin1980calculation, gunawardena1980segal}, which was the key input in proving the Segal conjecture for $X=\Ss$ and $G=C_p$. 
\end{proof}

\begin{lem} \label{lem:colimlim}
    Let $X \in \mathrm{Stable}(\mc{A})^\heartsuit$ be a finite type $\mc{A}$-comodule.
    Let $F : J \to \mathrm{Stable}(\mc{A})^\heartsuit$ be a diagram of $\mc{A}$-comodules where 
    the $F(j)$ are uniformly bounded below (with respect to the internal grading).
    The natural comparison map 
    \[ X \otimes \left( \lim_{J} F(j) \right) \to  \lim_J \left( X \otimes F(j) \right) \]
    in $\mathrm{Stable}(\mc{A})$ is an isomorphism.
\end{lem}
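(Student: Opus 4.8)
The plan is to reduce the statement to the case where $X$ is a \emph{dualizable} (i.e.\ finite-dimensional) comodule, for which $X \otimes (-)$ is a right adjoint and hence automatically commutes with all limits, and to carry out this reduction one internal degree at a time using the finiteness and bounded-below hypotheses. First I would record the structural facts I need. The category $\mathrm{Stable}(\mc{A})$ carries a $t$-structure with heart $\mathrm{Stable}(\mc{A})^{\heartsuit}$ the comodules, measuring the \emph{internal} (comodule) grading $t$; write $\tau_{\leq t}$ for the associated truncation. Since $\tau_{\leq t}$ is right adjoint to the inclusion of internally-coconnective objects it preserves all limits, and in particular a limit of objects concentrated in internal degrees $\geq m$ is again concentrated in internal degrees $\geq m$ (apply $\tau_{\leq m-1}$ and commute it past the limit to get $0$). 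Because $X$ is bounded below and finite type, each internal truncation $X_{\leq N}\coloneqq \tau_{\leq N}X$ is a finite-dimensional comodule, hence a dualizable object of $\mathrm{Stable}(\mc{A})$, and the cofiber of $X_{\leq N}\to X$ is concentrated in internal degrees $>N$. Finally, since the $F(j)$ are uniformly bounded below, say in internal degrees $\geq d$, so is $\lim_J F$.

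The key computation then proceeds after truncating. Fix $t$ and set $N = t-d$. Because $X/X_{\leq N}$ lives in internal degrees $>N$ while each $F(j)$ and $\lim_J F$ live in internal degrees $\geq d$, the tensor products $(X/X_{\leq N})\otimes F(j)$ and $(X/X_{\leq N})\otimes \lim_J F$ are concentrated in internal degrees $>t$ and are therefore annihilated by $\tau_{\leq t}$. This gives natural equivalences $\tau_{\leq t}(X\otimes -)\simeq \tau_{\leq t}(X_{\leq N}\otimes -)$ applied both to $\lim_J F$ and to each $F(j)$. Chaining these with the dualizability of $X_{\leq N}$ (so that $X_{\leq N}\otimes \lim_J F \simeq \lim_J(X_{\leq N}\otimes F)$) and with the fact that $\tau_{\leq t}$ commutes with $\lim_J$ yields
\[ \tau_{\leq t}\!\left( X\otimes \lim_J F\right) \simeq \tau_{\leq t}\!\left( X_{\leq N}\otimes \lim_J F\right) \simeq \tau_{\leq t}\lim_J\!\left( X_{\leq N}\otimes F\right) \simeq \tau_{\leq t}\lim_J\!\left( X\otimes F\right), \]
where every equivalence is induced by the comparison map. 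Hence $\tau_{\leq t}$ of the comparison map is an equivalence for every $t$.

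To conclude I would observe that both the source and the target are internally bounded below (the source because $X$ and $\lim_J F$ are, the target by the remark that limits preserve internal lower bounds), so each bigraded homotopy group is detected by $\tau_{\leq t'}$ for $t'$ large; a map that is an equivalence after every $\tau_{\leq t'}$ is therefore an isomorphism. The main obstacle is not any single hard estimate but getting this $t$-structure bookkeeping exactly right: checking that finite-dimensional comodules are dualizable in the stable category, that the internal truncation $\tau_{\leq t}$ commutes with $\lim_J$, and—most importantly—that the \emph{uniform} lower bound $d$ lets a \emph{single} finite truncation $X_{\leq t-d}$ handle all $j\in J$ at once. It is exactly this uniformity that makes the degree-counting survive the passage to $\lim_J$; without it the required truncation level of $X$ would depend on $j$ and the interchange would fail.
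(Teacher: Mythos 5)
Your overall skeleton---filter the finite-type comodule $X$ by finite-dimensional, hence dualizable, pieces and use the \emph{uniform} lower bound on the $F(j)$ to do an internal-degree count---is exactly the paper's strategy. But the technical device you hang everything on, an internal-degree truncation $\tau_{\leq t}$ that ``is right adjoint to the inclusion of internally-coconnective objects and hence preserves all limits,'' has a genuine gap: the adjunction is backwards. For any $t$-structure (or Bousfield localization), the truncation onto coconnective objects is a \emph{left} adjoint to the inclusion of coconnectives (it is the connective-cover functor that is a right adjoint), so limit-preservation does not follow; and for general $t$-structures truncation genuinely fails to commute with inverse limits because of $\lim^1$ phenomena---which is precisely the kind of subtlety this lemma exists to control. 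Since ``$\tau_{\leq t}$ commutes with $\lim_J$'' is used both in your main chain of equivalences and in your claim that a limit of objects in internal degrees $\geq m$ stays in degrees $\geq m$, the proof as written is incomplete at its central step.

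There is also a conflation of two different functors, and no single functor does everything you ask of $\tau_{\leq t}$. The functor whose value on the finite-type heart object $X$ is the finite-dimensional subcomodule $X_{\leq N}$ (which you need for dualizability) is the colocalization onto the subcategory generated by $\{\F_p[a] : a \leq N\}$; this one \emph{is} a right adjoint, but it does not commute with limits. The functor that does commute with limits is the localization killing the subcategory generated by $\{\F_p[b] : b > t\}$; but its restriction to the heart is \emph{not} the naive truncation---one has $\Ext^{s,b}_{\mc{A}}(\F_p, X_{\leq N}) \neq 0$ for many $b > N$ (nontriviality of Steenrod $\Ext$ is the whole point), so $X_{\leq N}$ is not coconnective in that sense, and in particular no internal-degree $t$-structure has heart the comodules as you assert. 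The paper's proof repairs exactly these points by abandoning truncations: it works with the full subcategories $\CC_m = \{Z : \Ext^{s,t}_{\mc{A}}(\F_p,Z) = 0 \text{ for } t < m\}$, notes that $\Ext^{s,t}_{\mc{A}}(\F_p,Z) \cong \pi_{-s}\Map(\F_p[t],Z)$ with $\F_p[t]$ compact, so that $\CC_m$ is closed under limits and colimits (this corepresentability, not an adjunction, is the correct reason internal connectivity survives $\lim_J$), establishes the tensor estimate $\CC_m \otimes \CC_n \subseteq \CC_{m+n}$ (which you also need but treat as an obvious degree count, even though $\lim_J F$ is not a heart object---it can carry $\lim^1$ contributions), and concludes by joint conservativity of the functors $\Ext^{s,t}_{\mc{A}}(\F_p,-)$ in place of your final $\tau_{\leq t'}$-detection step. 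Your degree-counting argument goes through essentially verbatim once recast in those terms; it is only the $t$-structure packaging that fails.
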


\begin{proof}
    We begin by observing that because the collection of functors $\Ext^{s,t}_{\mc{A}}(\F_p, -)$ are jointly conservative on $\mathrm{Stable}(\mc{A})$ 
    it will suffice to show this map induces isomorphisms on these ext groups.
    Now we make the key observation.
    Let $\CC_m \subseteq \mathrm{Stable}(\mc{A})$ denote the full subcategory consisting of those $Z$ such that $\Ext_{\mc{A}}^{s,t}(\F_p, Z) = 0$ for all $s,t$ with $t < m$.
    We have $\F_p[t] \in \CC_t$.
    As each of the shifts $\F_p[t] \in \mathrm{Stable}(\mc{A})$ is compact and $\Ext^{s,t}(\F_p, Z) \cong \pi_{-s}\Map^{\Sp}_{\mathrm{Stable}(\mc{A})}(\F_p[t], Z)$
    we can read off the following facts:
    \begin{enumerate}
        \item An $\mc{A}$-comodule concentrated in internal degrees $\geq m$ is in $\CC_m$.
        \item $\CC_m$ is closed under limits and colimits.
        \item If $M \in \CC_m$ and $N \in \CC_n$, then $M \otimes N \in \CC_{m+n}$.
    \end{enumerate}
    
    As $X$ is of finite type, we may write it as a colimit of skeleta $\colim_{k} X^{(k)}$ where 
    each $X^{(k)}$ is a finite $\A$-comodule concentrated in the range $[0,k]$ and the map $X^{(k-1)} \to X^{(k)}$ is an isomorphism in internal degree $< k$.
    Let $Y^{(k)} \coloneqq \cofib(X^{(k-1)} \to X)$, then $Y^{(k)}$ is an $\mc{A}$-comodule concentrated in degrees $\geq k$.
    
    At this point we may conclude that there is an $N$ such that
    $Y^{(k)} \otimes \lim_{J} F(j)$ and $\lim_K \left( Y^{(k)} \otimes F(j) \right)$ are both in $\CC_{k+N}$. 
    From this we learn that the vertical maps in the diagram below are isomorphisms for $t < k+N$.
    \[ \begin{tikzcd}
        \Ext_{\mc{A}}^{s,t}(\F_p, X^{(k)} \otimes \left( \lim_{J} F(j) \right) ) \ar[r] \ar[d] &
        \Ext_{\mc{A}}^{s,t}(\F_p, \lim_J \left( X^{(k)} \otimes F(j) \right)) \ar[d] \\
        \Ext_{\mc{A}}^{s,t}(\F_p, X \otimes \left( \lim_{J} F(j) \right)) \ar[r] &
        \Ext_{\mc{A}}^{s,t}(\F_p, \lim_J \left( X \otimes F(j) \right))
    \end{tikzcd} \]
    On the other hand as $X^{(k)}$ is a finite $\mc{A}$-comodule it is dualizable and therefore the top horizontal map is an isomorphism as well.
    Allowing $k$ to vary we learn that the bottom horizontal map is an isomorphism for all $s,t$ as desired.    
\end{proof}

\begin{corr} \label{cor:lim-underlying}
    The forgetful functor $\mathrm{Stable}(\mc{A}) \to \mc{D}(\F_p)^{\mathrm{gr}}$ commutes with 
    limits of diagrams that are uniformly bounded below (with respect to the internal grading) and valued in the heart.
\end{corr}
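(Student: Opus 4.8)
The plan is to deduce the corollary from the comonadic description of $\mathrm{Stable}(\mc{A})$ over the derived category of graded vector spaces. Recall that $\mathrm{Stable}(\mc{A})$ is the $\infty$-category of comodules for the comonad $\bot \coloneqq \mathcal{A}^{\vee} \otimes (-)$ on $\mc{D}(\F_p)^{\mathrm{gr}}$, where $\mathcal{A}^{\vee} = (\F_p)_*\F_p$ is the dual Steenrod algebra, and the forgetful functor $U \colon \mathrm{Stable}(\mc{A}) \to \mc{D}(\F_p)^{\mathrm{gr}}$ named in the statement is exactly the comonadic forgetful functor (left adjoint to the cofree comodule functor $R = \mathcal{A}^{\vee}\otimes(-)$, with $\bot = UR$). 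So the first step is just to record this presentation; it is part of Hovey's construction of the stable comodule category, and there are no flatness subtleties since we work over the field $\F_p$.

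The key structural input is the dual of the standard fact that the forgetful functor from modules over a monad creates all limits: for comodules over a comonad $\bot$, the forgetful functor $U$ \emph{creates those limits that are preserved by $\bot$}. Concretely, given a diagram $F \colon J \to \mathrm{Stable}(\mc{A})$ for which $\bot$ preserves the limit $L \coloneqq \lim_J U F(j)$ in $\mc{D}(\F_p)^{\mathrm{gr}}$, the limit of the coaction maps gives a map $L = \lim_J U F(j) \to \lim_J \bot\, U F(j) \simeq \bot L$, and this coaction (with its counit and coassociativity inherited from the $F(j)$) exhibits $L$ as the underlying object $U(\lim_J F)$. Thus it suffices to show that $\bot = \mathcal{A}^{\vee}\otimes(-)$ preserves $\lim_J U F(j)$ whenever $F$ is uniformly bounded below and valued in the heart.

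This last point is the graded-vector-space shadow of \Cref{lem:colimlim} and is elementary. If every $F(j)$ is concentrated in internal degrees $\ge m$, then so is $L$, since limits preserve a uniform internal lower bound. Because $\mathcal{A}^{\vee}$ is bounded below and of finite type, in each internal degree $t$ the functor $\mathcal{A}^{\vee}\otimes(-)$ involves only the finite direct sum over $i+j=t$ with $i \ge 0$ and $j \ge m$ of the finite-dimensional pieces $\mathcal{A}^{\vee}_i \otimes (-)_j$, and a finite direct sum of finite-dimensional spaces commutes with all limits. Hence the comparison map
\[ \mathcal{A}^{\vee}\otimes \lim_J U F(j) \to \lim_J \bigl(\mathcal{A}^{\vee}\otimes U F(j)\bigr) \]
is an isomorphism, and the same estimate applies verbatim to each iterated comonad $\bot^{n} = (\mathcal{A}^{\vee})^{\otimes n}\otimes(-)$, whose coefficients are again bounded below and of finite type.

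I expect the only genuine subtlety to be the $\infty$-categorical bookkeeping in the middle step: justifying that $U$ creates $\bot$-preserved limits at the level of $\infty$-categories rather than $1$-categories. This is the formal dual of Lurie's statement that the forgetful functor out of algebras over a monad creates limits, but one must check that $\bot$ (and, for the cosimplicial comparison, its iterates $\bot^n$) preserves the full limit cone and not merely the limit object — which is why the estimate of the previous paragraph was stated for all $\bot^n$. Apart from this, no new analytic input is required: the finite-type, bounded-below mechanism controlling the product is the very same phenomenon already exploited in \Cref{lem:colimlim}.
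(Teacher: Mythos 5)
There is a genuine gap, and it is at the very first step. The category $\mathrm{Stable}(\mc{A})$ (in the sense of Hovey/Pstr\k{a}gowski, which is what this paper uses: the ind-completion of the thick subcategory generated by dualizable comodules) is \emph{not} the $\infty$-category of comodules for the comonad $\mc{A}^{\vee}\otimes(-)$ on $\mc{D}(\F_p)^{\mathrm{gr}}$. A comonadic forgetful functor is always conservative, but the forgetful functor $U\colon \mathrm{Stable}(\mc{A}) \to \mc{D}(\F_p)^{\mathrm{gr}}$ is not. Concretely, let $v_0 \in \Ext^{1,1}_{\mc{A}}(\F_p,\F_p)$ and consider $v_0^{-1}\F_p = \colim (\F_p \xrightarrow{v_0} \F_p[1]\{1\} \xrightarrow{v_0} \cdots)$. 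Since $\F_p$ is compact in $\mathrm{Stable}(\mc{A})$, the bigraded homotopy of this colimit is $v_0^{-1}\Ext_{\mc{A}}(\F_p,\F_p) \cong \F_p[v_0^{\pm 1}] \neq 0$, so the object is nonzero; but $U$ preserves colimits and $U(v_0)=0$ (any map of nonzero cohomological degree between objects of the heart of $\mc{D}(\F_p)^{\mathrm{gr}}$ vanishes), so $U(v_0^{-1}\F_p)=0$. The comonadic category $\mathrm{Comod}_{\bot}(\mc{D}(\F_p)^{\mathrm{gr}})$ is the ``cobar-complete'' variant, which agrees with $\mathrm{Stable}(\mc{A})$ on bounded-below objects but kills exactly this kind of object; the difference between the two is not a bookkeeping issue but is the same completion phenomenon that gives Lin's theorem (and this paper's main theorem) its content. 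Since your entire argument runs through creation of limits by the comonadic forgetful functor, it does not establish anything about limits computed in $\mathrm{Stable}(\mc{A})$, which is what the corollary asserts. (Your two supporting steps — that a comonadic $U$ creates limits preserved by $\bot$, and that $\mc{A}^{\vee}\otimes(-)$ preserves uniformly bounded-below degreewise limits of graded vector spaces — are both fine; the bridge they are meant to sit on is not there.)

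For comparison, the paper avoids any descent/comonadicity statement by working in the opposite direction: it identifies $\mc{D}(\F_p)^{\mathrm{gr}}$ with $\mathrm{Mod}_{H_*(\F_p)}(\mathrm{Stable}(\mc{A}))$, i.e.\ modules over the algebra object $H_*(\F_p)=\mc{A}^{\vee}$ \emph{inside} $\mathrm{Stable}(\mc{A})$, under which $U$ becomes the basechange $H_*(\F_p)\otimes(-)$. Since the forgetful functor from modules over an algebra creates limits, the corollary reduces to showing that tensoring with the fixed comodule $H_*(\F_p)$ commutes with the given limits inside $\mathrm{Stable}(\mc{A})$, which is exactly \Cref{lem:colimlim} applied to $X = H_*(\F_p)$ (a finite-type comodule). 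Your degreewise finiteness estimate is the right moral input — it is the graded-vector-space shadow of \Cref{lem:colimlim} — but to repair your proof you would have to either prove the corollary entirely within $\mathrm{Stable}(\mc{A})$ as the paper does, or prove (nontrivially, and only after restricting to the bounded-below subcategories where it is true) that the comparison with the comonadic category is compatible with the limits in question.
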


\begin{proof}
    $\mc{D}(\F_p)^{\mathrm{gr}}$ is isomorphic to modules over $H_*(\F_p)$ in $\mathrm{Stable}(\mc{A})$ and the forgetful functor may therefore be identified with the basechange $H_*(\F_p) \otimes -$.
    The conclusion now follows from the fact that $H_*(\F_p)$ is locally finite and \Cref{lem:colimlim}.
\end{proof}

\section{Examples of $I$-complete spectra} \label{examples}

\begin{defn} \label{defnlocallyfinite}
Given a spectrum $X$, we say that it has \emph{locally finite $\bb{F}_p$-cohomology} if for any element $x \in H^{*}(X)$, the sub- $\mc{A}$-module generated by $x$ is finite.
\end{defn}

\begin{ex}
    Let $X$ be a bounded below spectrum whose $\bb{F}_p$-cohomology is locally finite and of finite type. Then the $\bb{F}_p$-homology of $X$ is $I$-complete.
\end{ex}

\begin{proof}[Details.]    
    We must verify that $H_{*}(X) \cong \varprojlim_n H_{*}(X)/I^n$.
    As $H^*(X)$ is finite type we have $H_*(X) \cong H^*(X)^\vee$.
    The hypothesis that $H^*(X)$ is locally finite means we can write $H^*(X)$ as union of the finite sub- $\mc{A}$-module generated by $H^*(X)$ for $* \leq k$ as $k \to \infty$.
    Dualizing this we learn that $H_*(X)$ can be written as an inverse limit of finite $\mc{A}$-modules which stabilizes in each degree in finitely many steps.
    As finite $\mc{A}$-modules are automatically $I$-complete, this is enough to conclude that
    $H_*(X)$ is $I$-complete. 
\end{proof}

    
    



\begin{ex}
    Let $X$ be a compact, simply connected space. 
    Then by \cite[Corollary 8.7.4]{schwartz1994unstable} we have $\Omega^n X$ has finite type, locally finite $\bb{F}_p$-cohomology.
\end{ex}

\begin{ex}
    Let $X$ be a spectrum whose homology is concentrated in a bounded range. 
    The $\bb{F}_p$-homology of $X$ is $I$-complete complete.
\end{ex}

\begin{ex}
    Let $\mc{O}$ be an operad for which the space of arity $n$ operations, $\mc{O}(n)$, 
    is given by a finite $\Sigma_n$ space with free action.
    Examples of such operads include the $\bb{E}_n$-operad for $n \neq \infty$.
    The free $\mc{O}$-algebra $\rm{Free}_{\mc{O}}(X)$ on a finite, $1$-connective spectrum $X$ 
    has $I$-complete homology.
\end{ex}

\begin{proof}[Details.]
    Recall that the free $\mc{O}$-algebra on a spectrum $X$ is given by
    $$\rm{Free}_{\mc{O}}(X) \cong \bigoplus_{n} (\mc{O}(n) \otimes X^{\otimes n } )_{h\Sigma_n} .$$
    The hypotheses on $X$ and $\mc{O}$ guarantee that 
    $(\mc{O}(n) \otimes X^{\otimes n } )_{h\Sigma_n}$ 
    is a finite, $n$-connective spectrum for each $n$.
    Each of these finite pieces is automatically $I$-complete and the increasing connectivities
    ensure that their sum is $I$-complete as well.
\end{proof}


\printbibliography

\end{document}